\newtheorem{theorem}{Theorem}[section]
\newtheorem{lemma}[theorem]{Lemma}
\newtheorem{corollary}[theorem]{Corollary}
\newtheorem{conjecture}[theorem]{Conjecture}
\newtheorem{question}[theorem]{Question}
\theoremstyle{definition}
\theoremstyle{remark}
\numberwithin{equation}{section}
\newcommand{\ba}{\backslash}
\newcommand{\subproof}{\begin{proof}[Subproof]}
\begin{document}

\sloppy

\title{Connectivity Functions and Polymatroids}

\author{Susan Jowett}
\address{School of Mathematics Statistics and Operations Research,
Victoria University of Wellington}
\email{swoppit@gmail.com}
\thanks{Susan Jowett's research was supported by an MSc scholarship from Victoria University of Wellington.}

\author{Songbao Mo}
\address{School of Mathematics Statistics and Operations Research,
Victoria University of Wellington}
\email{songbao.mo@gmail.com}

\author{Geoff Whittle}
\address{School of Mathematics Statistics and Operations Research,
Victoria University of Wellington}
\email{geoff.whittle@vuw.ac.nz}
\thanks{Geoff Whittle's research was supported by a grant from the Marsden Fund of New Zealand}

\subjclass{05B35}
\date{}

\begin{abstract}
A {\em connectivity function on} a set $E$ is a function $\lambda:2^E\rightarrow \mathbb R$
such that $\lambda(\emptyset)=0$, that $\lambda(X)=\lambda(E-X)$ for all $X\subseteq E$ and
that $\lambda(X\cap Y)+\lambda(X\cup Y)\leq \lambda(X)+\lambda(Y)$ for all 
$X,Y \subseteq E$. Graphs, matroids and, more generally, polymatroids have associated
connectivity functions. We introduce a notion of duality for polymatroids and prove that
every connectivity function is the connectivity function of a self-dual
polymatroid. We also prove that
every integral connectivity function is the connectivity function of a half-integral 
self-dual polymatroid.
\end{abstract}

\maketitle

\section{Introduction}

Let $E$ be a finite set, and  $\lambda$ be a function from the power set of $E$ into the
real numbers. Then $\lambda$ is {\em symmetric} if $\lambda(X)=\lambda(E-X)$ for all 
$X\subseteq E$; $\lambda$ is {\em submodular} if 
$\lambda(X\cap Y)+\lambda(X\cup Y)\leq \lambda(X)+\lambda(Y)$
for all $X,Y\subseteq E$; and $\lambda$ is {\em normalised} if  $\lambda(\emptyset)=0$.
If $\lambda$ is symmetric, submodular and normalised, then 
we say that $\lambda$ is a {\em connectivity function} with {\em ground set} $E$. We also 
say that $\lambda$ is a connectivity function {\em on} $E$. If $\lambda$ is a connectivity
function on $E$, then $\lambda$ is {\em integer-valued} if $\lambda(X)\in\mathbb Z$ 
for all $X\subseteq E$.
The  connectivity function $\lambda$ is {\em unitary} if 
$\lambda(\{x\})\leq 1$ for all $x\in E$.

Graphs and matroids have natural associated connectivity functions. These auxiliary structures 
capture vital information. It turns out that a number of quite fundamental properties of graphs and matroids hold
at the level of general connectivity functions. In particular this is the case
for properties associated with
branch-width and tangles of graphs and matroids. This is implicit --- but clear on a 
close reading --- in the paper
of Robertson and Seymour \cite{rsx}.  More explicit results  for connectivity functions are
proved in Geelen, Gerards and Whittle \cite{ggw1},
Clark and Whittle \cite{cw}, Hundertmark \cite{hu}, and  Grohe and Schweitzer \cite{gs}. 
  
Given that we can prove quite strong theorems for connectivity functions, the study
of these structures is well motivated and this paper forms part of that study.
The natural question arises as to just how general connectivity functions are.
The main purpose of this paper is to give an answer to that question. Polymatroids
are defined in the next section. We prove that
every connectivity function is the connectivity function of an associated polymatroid,
and every integral connectivity function is the connectivity function of an associated
half-integral polymatroid. The proofs of these facts are quite simple --- almost unnervingly
so --- but the results are apparently new and we believe that they are worth reporting.
Moreover, our main result surprised at least one of us as a number of naturally arising
connectivity functions seem to have little to do with polymatroids. 

As well as proving the above results we introduce a new notion of duality for 
polymatroids.  Via this duality we get stronger theorems. Every connectivity function is
the connectivity function of a {\em self-dual} polymatroid. An interesting feature of this
notion of duality is that, when restricted to the class of matroids, it gives a duality
that is subtly different from usual matroid duality. 

The results of this paper had their genesis in the MSc thesis of Mo \cite{songbao} and
were further developed in the MSc thesis of Jowett \cite{susan}. These theses also contain 
a number of other results on connectivity functions and their connection with polymatroids.

Since writing the first draft of this paper we have become aware of a paper of 
Mat\'u\v{s} \cite{matus}. While our perspective and terminology is quite different from those of 
Mat\'u\v{s} the fact is that a number of the results of this paper follow from results of
his. In particular our Lemmas~4.2 and 4.3 (ii) and (iii) follow from Theorem~1 of 
\cite{matus}. At a deeper level it is clear that most of the key ideas for which this paper
could claim originality are already present in \cite{matus}. 
The existence of Mat\'u\v{s}' paper came as a considerable surprise to as
as we believed throughout that we were exploring perfectly new territory. 
On the other hand Mat\'u\v{s}' perspective is quite different from ours --- he is motivated
by problems in information theory and our primary motivation comes from matroid theory.
Moreover the two papers have very different styles of exposition. The two papers should 
appeal to different audiences and we believe that there is a real advantage in having
both papers in print.

\section{Preliminaries}

Recall that a {\em polymatroid} $P=(r,E)$ is a finite set $E$ together with a 
function $r:2^E\rightarrow \mathbb R$ that is {\em normalised}, that is,
$r(\emptyset)=0$, {\em submodular}, that is, $r(X\cap Y)+r(X\cup Y)\leq r(X)+r(Y)$ for all
$X,Y\subseteq E$, and {\em increasing},
that is, $r(X)\leq r(Y)$ for all $X\subseteq Y\subseteq E$. The polymatroid $P$ is
{\em integer valued} if $r(X)\in \mathbb Z$ for all $Z\subseteq E$. It is {\em half-integral}
if $r(X)\in\{\frac{x}{2}:x\in \mathbb Z\}$ for all $X\subseteq E$. 
If $r(X)\leq k$ for all $X\subseteq E$, then $P$ is a $k$-{\em polymatroid}. We know of no
case where $k$-polymatroids are of interest except when $k$ is a positive integer.
We define the {\em connectivity function} $\lambda_P$ of the polymatroid
$P$ by
$\lambda_P(X)=r_P(X)+r_P(E-X)-r_P(E)$ for all $X\subseteq E$. 
It is well known and easily verified that, if $P$ is a polymatroid, then $\lambda_P$ is indeed
a connectivity function.

Two special cases of polymatroids are of particular interest.
Observe that a 
matroid $M$, when defined via its rank function is 
just an integer-valued polymatroid with the additional property 
that $r(\{e\})\leq 1$ for all
$e\in E$. In other words, a matroid is an integer-valued 1-polymatroid.
Via this specialisation, the connectivity function $\lambda_M$ of the matroid
$M$ as defined in, for example Oxley \cite{ox92}, is  nothing more than
the connectivity function we obtain when we regard $M$ as a polymatroid. Specifically,
if $M$ is a matroid on $E$, then 
$\lambda_M(X)=r_M(X)+r_M(E-X)-r_M(E)$ for all $X\subseteq E$. Evidently 
connectivity functions of matroids are integer-valued and unitary.

Let $G=(V,E)$ be a graph. 
Then the  {\em connectivity function of $G$}, denoted
$\lambda_G$ is defined by $\lambda_G(X)=|V(X)|+|V(E-X)|-|V|$ for all $X\subseteq E$.
Connectivity functions of graphs capture vertex connectivity. For each vertex cut
of order $k$ in $G$ there is an associated partition $(X,E-X)$ of the edges such that
$\lambda_G(X)=k$; in fact there may be more than one, so that the connectivity function of a
graph gives more information than the vertex cuts of that graph. 
Of course connectivity functions of graphs are
integer-valued but they are not usually unitary as, if $e$ is an edge of $G$, then
$\lambda_G(\{e\})=2$ unless $e$ is a loop or is incident with a leaf.
Associated with a graph is its cycle matroid $M(G)$. The connectivity function of
$G$ is quite distinct from that of $M(G)$. Nonetheless, the two are related. For example,
it is proved in \cite{hicks} that, apart from essentially trivial exceptions, the 
branch-width of a graph and its cycle matroid are the same. In the language of 
connectivity functions this means that, apart from the 
same exceptions, the branch-width of $\lambda(G)$ is one greater than the branch width of
$\lambda_{M(G)}$. In this paper when we refer to the connectivity function of a graph
$G$ we will always mean $\lambda_G$ as defined above.

Associated with a graph $G=(V,E)$ we define an integer-valued set function $r_G$ on
$E$ by setting $r_G(X)=|V(X)|$ for all $X\subseteq E$. We say that $r_G$ is the {\em rank 
function} of $G$. Evidently $r_G$ knows nothing
about isolated vertices or vertex labels in $G$. Apart from that, the graph $G$ is determined
by its rank function. Another feature of $r_G$ is that it is the rank function of an
integer-valued  
2-polymatroid. The fact that graphs essentially correspond to a family of 
integer-valued 2-polymatroids is of some interest and we take the opportunity here of 
expanding a little on these relationships.

Let $M$ be a matroid on a set, say $V$, and let $E$ be a collection of subsets of $V$.
Define the function $r_P$ on $E$ as follows: 
$r_P(X)=r_M\left(\cup_{x\in X}\right)$ for all 
$X\subseteq E$. It is well known, see for example \cite[Theorem~11.1.9]{ox92}, that 
every integer-valued polymatroid can be obtained in this way. 
Let $M$ be a {\em free matroid} on $V$, that is $r(V)=|V|$. In essence, free matroids are
trivial matroids.
Let $E$ be a collection of subsets of $V$ of size at most 2. The polymatroids we construct
from free matroids via this construction are precisely the 2-polymatroids we constructed
from the edge sets of graphs in the previous paragraph.  
Note that this way of viewing graphs is nothing
more than the time-honoured way of viewing a graph as a collection of lines generated by 
pairs of points of a simplex. 

\section{Polymatroid Duality} 

Let $P=(r,E)$ be a polymatroid. For a set $X\subseteq E$, we let
\[
||X||_r=\sum_{x\in X}(r(\{x\})).
\]  
We define the set function $r^*$ on $E$ by setting 
\[
r^*(X)= r(E-X)+||X||_r-r(E)
\]
for all $X\subseteq E$. We call the pair $P^*=(r^*,E)$, the {\em dual} of $P$.
We will prove that $P^*$ is a polymatroid, but first note an elementary lemma.

\begin{lemma}
\label{bar-bar}
Let $P=(r,E)$ be a polymatroid and let $X$ and $Y$ be subsets with $X\subseteq Y\subseteq E$.
Then $r(Y)-r(X)\leq ||Y-X||_r$.
\end{lemma}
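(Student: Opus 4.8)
The plan is to prove the inequality by adding the elements of $Y-X$ to $X$ one at a time and controlling each rank increment via submodularity. First I would write $Y-X=\{z_1,\dots,z_k\}$ and define the chain $X_0=X$ and $X_i=X\cup\{z_1,\dots,z_i\}$ for $1\le i\le k$, so that $X_k=Y$. The goal then reduces to bounding each single-step increment $r(X_i)-r(X_{i-1})$ by $r(\{z_i\})$ and telescoping.

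For the single-step bound, I would apply submodularity to the pair $X_{i-1}$ and $\{z_i\}$, giving $r(X_{i-1}\cap\{z_i\})+r(X_{i-1}\cup\{z_i\})\le r(X_{i-1})+r(\{z_i\})$. Since $z_i\notin X_{i-1}$, the intersection $X_{i-1}\cap\{z_i\}$ is empty, so normalisation makes the first term zero; and $X_{i-1}\cup\{z_i\}=X_i$. This yields $r(X_i)-r(X_{i-1})\le r(\{z_i\})$ directly.

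Finally I would sum these increments over $i=1,\dots,k$. The left-hand sides telescope to $r(X_k)-r(X_0)=r(Y)-r(X)$, while the right-hand sides sum to $\sum_{i=1}^{k} r(\{z_i\})=||Y-X||_r$, which is exactly the claimed bound.

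There is no real obstacle here: the argument is a routine telescoping of submodular increments, and the only points requiring care are the use of normalisation ($r(\emptyset)=0$) to discard the intersection term and the bookkeeping to ensure each $z_i$ is genuinely new, so that the intersection is indeed empty. Notably, the increasing property of $r$ is not needed at all. An alternative and essentially equivalent route would be induction on $|Y-X|$, peeling off a single element at each stage and invoking the same submodular inequality; I would use whichever presentation reads more cleanly, but the telescoping version above seems the most transparent.
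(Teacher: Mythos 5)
Your proof is correct, and it rests on exactly the same ingredients as the paper's: submodularity and normalisation, with the increasing property never needed. The organisation differs slightly. The paper makes one block split: applying submodularity to $X$ and $Y-X$ (with normalisation killing the empty-intersection term) gives $r(Y)\le r(X)+r(Y-X)$, and it then bounds $r(Y-X)\le \sum_{y\in Y-X}r(\{y\})=||Y-X||_r$ by appealing to subadditivity of $r$. You instead telescope directly from $X$ up to $Y$, adding one element of $Y-X$ at a time and bounding each increment $r(X_i)-r(X_{i-1})\le r(\{z_i\})$ by submodularity against a singleton. These are two packagings of the same idea: the paper's subadditivity step, if expanded, is precisely your telescoping argument run from $\emptyset$ up to $Y-X$, with the same use of empty intersections and normalisation at each stage. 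The paper's version is shorter if subadditivity is taken as known; yours is fully self-contained at the level of single-element steps and makes explicit exactly where normalisation enters. Neither argument gains generality over the other, and your closing observation that monotonicity of $r$ plays no role applies equally to both.
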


\begin{proof}
By submodularity, $r(Y-X)+r(X)\geq r(Y)$, so that
$r(Y)-r(X)\leq r(Y-X)$. Again, by submodularity, $r(Y-X)\leq \sum_{y\in Y-X}r(\{y\})$, so 
that $r(Y-X)\leq ||Y-X||_r$. Thus $r(Y)-r(X)\leq ||Y-X||_r$ as required.
\end{proof}

\begin{lemma}
\label{dual}
Let $P$ be be a polymatroid on $E$. Then the dual $P^*=(r^*,E)$ is a polymatroid on 
$E$.
\end{lemma}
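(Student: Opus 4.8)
The plan is to verify the three defining properties of a polymatroid for $r^*$: normalisation, the increasing property, and submodularity. I expect the first two to be short, with submodularity being the main point of the proof.

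First I would check normalisation. Evaluating $r^*(\emptyset) = r(E-\emptyset) + ||\emptyset||_r - r(E) = r(E) + 0 - r(E) = 0$, since the empty sum is zero. This is immediate.

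Next I would verify that $r^*$ is increasing. Suppose $X \subseteq Y \subseteq E$. I want to show $r^*(X) \leq r^*(Y)$, that is,
\[
r(E-X) + ||X||_r - r(E) \leq r(E-Y) + ||Y||_r - r(E).
\]
Cancelling $r(E)$ from both sides, this reduces to showing $r(E-X) - r(E-Y) \leq ||Y||_r - ||X||_r = ||Y-X||_r$, where the last equality holds because $X \subseteq Y$ and $||\cdot||_r$ is additive over disjoint unions. Now $E-Y \subseteq E-X$, and $(E-X)-(E-Y) = Y-X$, so Lemma~\ref{bar-bar} applied to the pair $E-Y \subseteq E-X$ gives exactly $r(E-X) - r(E-Y) \leq ||Y-X||_r$. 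This is precisely where Lemma~\ref{bar-bar} does its work, so the increasing property follows cleanly.

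Finally, for submodularity I would take arbitrary $X, Y \subseteq E$ and expand $r^*(X\cap Y) + r^*(X\cup Y)$ using the definition. The key observations are that $E-(X\cap Y) = (E-X)\cup(E-Y)$ and $E-(X\cup Y) = (E-X)\cap(E-Y)$, and that $||\cdot||_r$ is modular, satisfying $||X\cap Y||_r + ||X\cup Y||_r = ||X||_r + ||Y||_r$ since it is a sum of single-element values. Substituting, the $r(E)$ terms and the $||\cdot||_r$ terms reorganise to match those of $r^*(X) + r^*(Y)$, and the remaining inequality reduces to the submodularity of $r$ applied to the complemented sets $E-X$ and $E-Y$, namely
\[
r((E-X)\cup(E-Y)) + r((E-X)\cap(E-Y)) \leq r(E-X) + r(E-Y).
\]
The main (though still routine) obstacle is simply organising this substitution carefully so that all the bookkeeping cancels correctly; the mathematical content is just the modularity of $||\cdot||_r$ combined with the submodularity of $r$ on complements.
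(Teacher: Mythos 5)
Your proposal is correct and follows essentially the same path as the paper's proof: direct verification of the three axioms, with Lemma~\ref{bar-bar} applied to the pair $E-Y\subseteq E-X$ (using $(E-X)-(E-Y)=Y-X$) for the increasing property, and the modularity of $||\cdot||_r$ combined with submodularity of $r$ on the complements $E-X$, $E-Y$ for submodularity. The only cosmetic difference is that you make the identities $E-(X\cap Y)=(E-X)\cup(E-Y)$ and $E-(X\cup Y)=(E-X)\cap(E-Y)$ explicit, which the paper leaves implicit.
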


\begin{proof} 
We need to show that $r^*$ is normalised, increasing and submodular. We have
\[
r^*(\emptyset)=r(E)+||\emptyset||_r-r(E)=0,
\]
so that $r^*$ is normalised. Assume that $X\subseteq Y\subseteq E$. Then
\begin{align*}
r^*(Y)-r^*(X)&=r(E-Y)+||Y||_r-r(E)-r(E-X)-||X||_r+r(E)\\
&=r(E-Y)-r(E-X)+(||Y||_r-||X||_r)\\
&=||Y-X||_r-(r(E-X)-r(E-Y)).
\end{align*}
However $Y-X=(E-X)-(E-Y)$ so that it follows from  
Lemma~\ref{bar-bar} that $r^*(Y)-r^*(x)\geq 0$,
that is, $r^*$ is increasing.

Now say that $X,Y\subseteq E$. Evidently $||X||_r+||Y||_r=||X\cup Y||_r+||X\cap Y||_r$.
Using this fact and submodularity we see that
\begin{align*}
& r^*(X)+r^*(Y)\\
=& r(E-X)+||X||_r-r(E)+r(E-Y)+||Y||_r-r(E)\\
\geq & r(E-(X\cup Y))+r(E-(X\cap Y))+||X\cup Y||_r+||X\cap Y||_r-2r(E)\\
=& r^*(X\cup Y)+r^*(X\cap Y).
\end{align*}
Thus $r^*$ is submodular and the lemma follows.
\end{proof}

An alternative notion of duality for integer-valued polymatroids
was introduced in \cite{wh}. For a fixed positive integer $k$,  the set function
$r^{*k}$ is defined, for all $X\subseteq E$, by
\[
r^{*k}(X)=r(E-X)+k|X|-r(E).
\]
In the case $k=1$, we have the usual dual for matroids. 
The $k$-dual of an integer-valued $k$-polymatroid $P$ is an integer-valued $k$-polymatroid,
which we denote by $P^{*k}$. Moreover $k$-duality enjoys two natural properties. First,
$k$-duality is an {\em involution} on the class of $k$-polymatroids, that is,
for any $k$-polymatroid $P$, we have $(P^{*k})^{*k}=P$. Second; $k$-duality interchanges
deletion and contraction, that is, for any $X\subseteq E$, we have 
$(P\ba X)^{*k}=P^{*k}/X$. Indeed, it is proved in \cite{wh} that $k$-duality is the only 
function on the class of $k$-polymatroids that enjoys both of these properties. The definition
of duality for polymatroids we have given here
is not restricted to $k$-polymatroids for any fixed $k$ and
is certainly different from $k$-duality, so 
something has to give. It turns out that our notion of duality is not 
in general an involution.
Despite this, we shall see that the situation is not so dire. Indeed, it has its appeal.

Let $P=(r,E)$ be a polymatroid. Recall that we denoted the connectivity function of 
$P$ by $\lambda_P$. An element $e\in E$ is {\em compact} if $r(\{e\})=\lambda_P(\{e\})$. 
We say that the polymatroid $P$ is {\em compact} if every element of $P$ is compact.
Intuitively compact elements are ones that do not ``stick out'' from the rest of the 
polymatroid. More formally, we have

\begin{lemma}
\label{spanned}
Let $P=(r,E)$ be a polymatroid. The element $e\in E$ is compact if and only if 
$r(E-\{e\})=r(E)$.
\end{lemma}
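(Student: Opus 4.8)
The plan is to unwind the two definitions involved and observe that the claimed equivalence falls out immediately after a single cancellation. By definition, the element $e\in E$ is compact precisely when $r(\{e\})=\lambda_P(\{e\})$. So the first step is to substitute the defining formula for the connectivity function of $P$, namely $\lambda_P(\{e\})=r(\{e\})+r(E-\{e\})-r(E)$, into this equation. This rewrites the defining condition for compactness of $e$ as
\[
r(\{e\})=r(\{e\})+r(E-\{e\})-r(E).
\]

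The second step is simply to cancel the common term $r(\{e\})$ from both sides, which leaves $0=r(E-\{e\})-r(E)$, that is, $r(E-\{e\})=r(E)$. Since each manipulation---substituting a definition and then cancelling a term appearing on both sides---is a genuine equivalence rather than a one-way implication, both directions of the biconditional are established simultaneously. I do not expect any real obstacle: the lemma is a direct consequence of the definitions of compactness and of $\lambda_P$, and it requires neither submodularity nor Lemma~\ref{bar-bar}. Conceptually, the content is just the observation that $e$ fails to ``stick out'' exactly when adjoining it to $E-\{e\}$ does not raise the rank, which is the intuitive meaning the surrounding discussion attaches to compactness.
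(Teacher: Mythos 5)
Your proof is correct and is essentially identical to the paper's own argument: both unwind the definition of compactness, substitute $\lambda_P(\{e\})=r(\{e\})+r(E-\{e\})-r(E)$, and cancel $r(\{e\})$ to obtain the equivalence with $r(E-\{e\})=r(E)$. No issues.
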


\begin{proof}
The element $e$ is compact if and only if 
$r(\{e\})=\lambda_P(\{e\})=r(E-\{e\})+r(\{e\})-r(E)$. This holds if and only if
$(E-\{e\})=r(E)$.
\end{proof}

A matroid is compact if and only if it has no coloops. The 2-polymatroid that we associate 
with a connected graph is compact if and only if the graph has no leaves. 
Given a polymatroid $P$, 
there is a natural compact polymatroid that we can associate with $P$ that has the same 
connectivity function as $P$. We consider this now.

Let $P=(E,r)$ be a polymatroid. Define the function $r^\flat\rightarrow \mathbb R$ by
\[
r^\flat(X)=r(X)+\sum_{x\in X}(\lambda(\{x\})-r(\{x\}))
\]
for all $X\subseteq E$. The pair $P^\flat=(E,r^\flat)$ is the {\em compactification}
of $P$.

It turns out that $P^\flat$ is a polymatroid and $\lambda_{P^\flat}=\lambda_P$. These
facts will follow from the connection with duality.

\begin{lemma}
\label{dual-compact}
Let $P=(E,r)$ be a polymatroid. Then the following hold.
\begin{itemize}
\item[(i)] $\lambda_{P^*}=\lambda_P$.
\item[(ii)] $P^*$ is compact.
\item[(iii)] $(P^*)^*=P^\flat$.
\end{itemize}
\end{lemma}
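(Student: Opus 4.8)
The plan is to verify all three parts by direct computation from the definitions, organised around one identity that ties them together: the dual rank of a singleton equals the original connectivity of that singleton. Throughout I would lean on the fact that the additive term $||\cdot||_r$ partitions, that is, $||X||_r+||E-X||_r=||E||_r$ for every $X\subseteq E$, since each $r(\{x\})$ is counted exactly once across $X$ and $E-X$.

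For part (i), I would expand $\lambda_{P^*}(X)=r^*(X)+r^*(E-X)-r^*(E)$ using $r^*(Y)=r(E-Y)+||Y||_r-r(E)$, noting first that $r^*(E)=r(\emptyset)+||E||_r-r(E)=||E||_r-r(E)$. After substitution, the contribution $||X||_r+||E-X||_r-||E||_r$ vanishes by the partition identity, the $r(E)$ terms combine, and what survives is exactly $r(X)+r(E-X)-r(E)=\lambda_P(X)$. For part (ii), by Lemma~\ref{spanned} it suffices to check that $r^*(E-\{e\})=r^*(E)$ for every $e\in E$. Substituting into the definition gives $r^*(E-\{e\})=r(\{e\})+||E-\{e\}||_r-r(E)$ and $r^*(E)=||E||_r-r(E)$; since $||E-\{e\}||_r=||E||_r-r(\{e\})$, the $r(\{e\})$ terms cancel and the two expressions agree, so $e$ is compact in $P^*$.

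For part (iii), I would first record the identity $r^*(\{x\})=r(E-\{x\})+r(\{x\})-r(E)=\lambda_P(\{x\})$, immediate from the definitions of $r^*$ and $\lambda_P$. This is the conceptual heart of the lemma: it re-proves (ii) instantly, since compactness of $e$ in $P^*$ means $r^*(\{e\})=\lambda_{P^*}(\{e\})=\lambda_P(\{e\})$ by part (i), and it shows that the singleton term used to build the second dual is $||X||_{r^*}=\sum_{x\in X}\lambda_P(\{x\})$. Expanding $(r^*)^*(X)=r^*(E-X)+||X||_{r^*}-r^*(E)$ and substituting $r^*(E-X)=r(X)+||E-X||_r-r(E)$, $r^*(E)=||E||_r-r(E)$, and the value of $||X||_{r^*}$ just computed, the $r(E)$ terms cancel and $||E-X||_r-||E||_r$ collapses to $-\sum_{x\in X}r(\{x\})$. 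What remains is $r(X)+\sum_{x\in X}\bigl(\lambda_P(\{x\})-r(\{x\})\bigr)$, which is precisely $r^\flat(X)$, giving $(P^*)^*=P^\flat$.

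None of the steps presents a genuine obstacle; the work is bookkeeping with the additive singleton term. The one point requiring care, particularly in part (iii), is tracking which rank function the norm $||\cdot||$ is taken against, since $||X||_{r^*}$ differs from $||X||_r$ and it is exactly this difference, captured by the singleton identity $r^*(\{x\})=\lambda_P(\{x\})$, that produces the correction term defining the compactification.
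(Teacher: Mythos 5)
Your proposal is correct and takes essentially the same route as the paper's proof: direct expansion of the definitions for all three parts, using the partition identity $||X||_r+||E-X||_r=||E||_r$, with part (ii) verified via Lemma~\ref{spanned}. Your explicit singleton identity $r^*(\{x\})=\lambda_P(\{x\})$ merely makes visible a step the paper performs implicitly inside its computation of $(r^*)^*(X)$, so it is a presentational refinement rather than a different argument.
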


\begin{proof}
Consider (i). For a set $X\subseteq E$, we have $||X||_r+||E-X||_r=||E||_r$.
Using this fact and definitions we see that
\begin{align*}
& \lambda_{P^*}(X)\\
= &r^*(X)+r^*(E-X)-r^*(E)\\
= &r(E-X)+||X||_r-r(E)+r(X)+||E-X||_r-r(E)\\
&\text{~~~~~~~~}-r(\emptyset)-||E||_r+r(E)\\
= &\lambda_P(X).
\end{align*}

Consider (ii). Say $e\in E$. Then 
\begin{align*}
r^*(E-\{e\})&=r(\{e\})+||E-\{e\}||_r-r(E)\\
&=||E||_r-r(E)\\
&=r(\emptyset)+||E||_r-r(E)\\
&=r^*(E).
\end{align*}
Therefore $e$ is compact in $P^*$, and (ii) follows.

Consider (iii). Say $X\subseteq E$. Then
\begin{align*}
(r^*)^*(X)&=r^*(E-X)+||X||_{r^*}-r^*(E)\\
&=r(E-(E-X))+||E-X||_r-r(E)+||X||_{r^*}\\
&{\text ~~~~~~~~~} -r(E-E)-||E||_r+r(E)\\
&=r(X)+\sum_{x\in X}(\lambda_P(\{x\})-r(\{x\}))\\
&=r^\flat(X).
\end{align*}
Therefore $(P^*)^*=P^\flat$ as required.
\end{proof}

As an immediate consequence of Lemmas~\ref{dual} and \ref{dual-compact}, we obtain

\begin{corollary}
\label{compact-ok}
Let $P=(E,r)$ be a polymatroid. Then $P^\flat$ is a polymatroid and 
$\lambda_{P^\flat}=\lambda_P$.
\end{corollary}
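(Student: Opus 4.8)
The plan is to exploit the identity $(P^*)^* = P^\flat$ established in Lemma~\ref{dual-compact}(iii), so that the compactification is nothing more than the double dual. Both claimed properties then follow by applying the earlier results twice, with no fresh computation required.

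First I would verify that $P^\flat$ is a polymatroid. By Lemma~\ref{dual}, the dual $P^*$ of the polymatroid $P$ is again a polymatroid. Since $P^*$ is itself a polymatroid, Lemma~\ref{dual} applies once more to give that $(P^*)^*$ is a polymatroid. Invoking Lemma~\ref{dual-compact}(iii) to rewrite $(P^*)^*$ as $P^\flat$, I conclude that $P^\flat$ is a polymatroid. Next I would check that the connectivity function is preserved. Lemma~\ref{dual-compact}(i) gives $\lambda_{P^*} = \lambda_P$ for any polymatroid $P$; applying this first to $P$ and then to the polymatroid $P^*$ yields $\lambda_{(P^*)^*} = \lambda_{P^*} = \lambda_P$. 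Using $(P^*)^* = P^\flat$ once more gives $\lambda_{P^\flat} = \lambda_P$, as required.

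The step that deserves attention---rather than a genuine obstacle---is ensuring that the repeated use of Lemmas~\ref{dual} and \ref{dual-compact} is legitimate. Each application requires the object to which it is applied to be a genuine polymatroid, and this is precisely what the first application of Lemma~\ref{dual} certifies: it guarantees that $P^*$ is a polymatroid before we dualise or take its connectivity function a second time. With that caveat observed, the corollary is an immediate composition of the two preceding lemmas, which is exactly why it is stated as a corollary rather than proved directly from the definition of $r^\flat$.
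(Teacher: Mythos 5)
Your proposal is correct and follows exactly the route the paper intends: the paper states this corollary as an ``immediate consequence of Lemmas~\ref{dual} and \ref{dual-compact},'' which is precisely your argument of applying Lemma~\ref{dual} twice to see that $(P^*)^*$ is a polymatroid, identifying $(P^*)^*$ with $P^\flat$ via Lemma~\ref{dual-compact}(iii), and applying Lemma~\ref{dual-compact}(i) twice to get $\lambda_{P^\flat}=\lambda_{P^*}=\lambda_P$. Your explicit check that $P^*$ is a genuine polymatroid before the second application is the right point to be careful about, and nothing more is needed.
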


Thus, while polymatroid duality is not an involution in general, it is an involution on the 
class of compact polymatroids. The situation is analogous to that of planar drawings
of graphs. The planar dual is always connected so that planar duality is not an 
involution, but planar duality is an
involution on the class of planar drawings of connected graphs.

Regarded as a polymatroid, a matroid is compact if and only if it has no coloops.
Say $e$ is a loop of the matroid $M$. With the usual notion of matroid duality 
$e$ becomes a coloop in the dual of $M$.
With the notion of duality given here, $e$ remains a loop in the dual. 
Apart from that, the two notions of duality coincide for matroids.

We now consider the connection with minors. Let $P=(r,E)$ be a polymatroid, and 
$A\subseteq E$. The {\em deletion} of $A$ from $P$, denoted $P\ba A$ is defined, for all
$X\subseteq E-A$, by $r_{P\ba A}(X)=r_P(X)$. The {\em contraction} of $A$ from $P$,
denoted $P/A$, is defined for all $X\subseteq E-A$, by $r_{P/A}(X)=r_P(X\cup A)-r_P(A)$.
These notions generalise familiar ones from matroid theory. 

We would like to say that, just as with matroids, deletion and contraction are interchanged
under duality, but this cannot be, since compactness can be lost 
by deletion. However compactness cannot be lost by contraction.

\begin{lemma}
\label{compact-contract}
Let $P=(E,r)$ be a compact polymatroid. Then $P/A$ is compact for any $A\subseteq E$.
\end{lemma}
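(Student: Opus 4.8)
The plan is to reduce compactness of $P\con A$ to the simple criterion supplied by Lemma~\ref{spanned}. Fix an arbitrary element $e\in E-A$; since the ground set of $P\con A$ is $E-A$, showing that $P\con A$ is compact amounts to showing that each such $e$ is compact in $P\con A$. By Lemma~\ref{spanned} applied to the polymatroid $P\con A$, this holds precisely when $r_{P\con A}((E-A)-\{e\})=r_{P\con A}(E-A)$.

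First I would expand both sides using the definition $r_{P\con A}(X)=r_P(X\cup A)-r_P(A)$. On the right we get $r_P(E)-r_P(A)$, and on the left, using that $((E-A)-\{e\})\cup A=E-\{e\}$ because $e\notin A$, we get $r_P(E-\{e\})-r_P(A)$. The term $r_P(A)$ cancels, so the required equality collapses to $r_P(E-\{e\})=r_P(E)$.

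Finally I would invoke compactness of $P$: by Lemma~\ref{spanned} applied to $P$ itself, every $e\in E$ satisfies $r_P(E-\{e\})=r_P(E)$, and in particular this holds for our $e\in E-A$. This yields exactly the equality needed, completing the proof.

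There is no serious obstacle here; the argument is a direct calculation. The only point requiring care is the bookkeeping of ground sets --- remembering that compactness for $P\con A$ is tested against $E-A$ rather than $E$ --- together with the set identity $((E-A)-\{e\})\cup A=E-\{e\}$. The cancellation of the term $r_P(A)$ is the conceptual heart of the matter: it is precisely what allows the spanning condition $r_P(E-\{e\})=r_P(E)$ to survive contraction, in contrast to deletion, where the analogous equality can be destroyed (as foreshadowed in the discussion preceding the lemma).
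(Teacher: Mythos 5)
Your proof is correct and is essentially the paper's argument: both reduce the claim, via the definition of contraction, to the equality $r_P(E-\{e\})=r_P(E)$ supplied by compactness of $P$ through Lemma~\ref{spanned}. The only cosmetic difference is that you also invoke Lemma~\ref{spanned} for $P/A$ to characterise compactness there, whereas the paper expands $\lambda_{P/A}(\{e\})$ directly and checks it equals $r_{P/A}(\{e\})$ --- the underlying cancellation of the $r_P(A)$ terms is identical.
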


\begin{proof}
Say $e\in E-\{a\}$. By Lemma~\ref{spanned}, $r(E-\{e\})=r(E)$. We then have
\begin{align*}
& \lambda_{P/A}(\{e\})\\
=& r_{P/A}(\{e\})+r_{P/A}((E-A)-\{e\})-r_{P/A}(E-A)\\
=& r_P(A\cup\{e\})-r_P(A)+r_P(E-\{e\})-r_P(A)-(r_P(E)-r_P(A))\\
=&r_P(A\cup\{e\})-r_P(A)\\
=&r_{P/A}(\{e\}),
\end{align*}
so that $P/A$ is indeed compact.
\end{proof}

Again the situation is analogous to that of plane graphs where connectivity can be
lost by deletion, but not by contraction. Combined with compactification, we do
obtain a nice relation under duality.

\begin{lemma}
\label{nice-dual}
Let $P=(E,r)$ be a polymatroid and $A\subseteq E$. Then
$(P/A)^*=(P^*\ba A)^\flat$.
\end{lemma}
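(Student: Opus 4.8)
The plan is to verify the identity $(P/A)^* = (P^* \backslash A)^\flat$ directly by computing the rank functions of both sides on an arbitrary set $X \subseteq E - A$ and checking they agree. Since a polymatroid is determined by its rank function, establishing equality of rank functions suffices. So first I would expand the left-hand side using the definitions of contraction and duality in sequence: starting from $r_{P/A}$, I apply the dual operation, being careful that the dual is taken \emph{within the ground set} $E - A$, so that the relevant sum $\|\cdot\|$ and the total-rank term both use $E - A$ rather than $E$. Concretely, $(r_{P/A})^*(X) = r_{P/A}((E-A)-X) + \|X\|_{r_{P/A}} - r_{P/A}(E-A)$, and I would rewrite each $r_{P/A}$ term via $r_{P/A}(Y) = r_P(Y \cup A) - r_P(A)$.

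Next I would expand the right-hand side. Here the order of operations matters: I first delete $A$ from $P^*$, then compactify. Deletion is the easy step, since $r_{P^* \backslash A}(Y) = r_{P^*}(Y)$ for $Y \subseteq E - A$. Compactification then adds the correction term $\sum_{x \in X}(\lambda_{P^* \backslash A}(\{x\}) - r_{P^* \backslash A}(\{x\}))$. The key subtlety I would watch for is that the compactification correction is computed relative to the \emph{deleted} polymatroid $P^* \backslash A$, so both $\lambda$ and the singleton rank in that correction term must be evaluated in the ground set $E - A$; in particular $\lambda_{P^* \backslash A}(\{x\}) = r_{P^*}(\{x\}) + r_{P^*}((E-A)-\{x\}) - r_{P^*}(E-A)$.

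The main work — and the step I expect to be the real obstacle — is reconciling the two expansions, because the two sides package the information differently: the contraction-then-dual side carries terms of the form $r_P(\,\cdot \cup A)$ throughout, whereas the delete-then-compactify side carries dual ranks $r_{P^*}$ together with a singleton-correction sum. To bridge them I would substitute the definition $r^*(Z) = r(E-Z) + \|Z\|_r - r(E)$ into every $r_{P^*}$ appearing on the right, converting everything back to $r_P$ on complements within $E$. At that point both sides should be expressed purely in terms of $r_P$ evaluated at sets built from $X$ and $A$, plus $\|\cdot\|_r$ sums. The $\|\cdot\|$ bookkeeping is where errors hide: I would use additivity of $\|\cdot\|_r$ over disjoint unions (so $\|X\|_r$, $\|A\|_r$, and $\|E-A-X\|_r$ recombine cleanly) to collapse the singleton sums, and I would verify that the singleton ranks $r_{P/A}(\{x\})$ and $r_{P^*\backslash A}(\{x\})$, once unfolded, supply exactly the terms needed to match.

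Finally, after the algebraic dust settles I expect the two expressions to coincide termwise, giving $(P/A)^* = (P^*\backslash A)^\flat$ as rank functions and hence as polymatroids. If a fully direct computation proves too unwieldy, a cleaner alternative would be to exploit Lemma~\ref{dual-compact}: since $(P^*)^* = P^\flat$ and compactification does not change the connectivity function (Corollary~\ref{compact-ok}), I could instead show that $(P/A)^*$ and $P^* \backslash A$ are themselves duals of one another — i.e. that $((P/A)^*)^* = P^* \backslash A$ after accounting for compactness — and then apply $(P^*\backslash A) = ((P^*\backslash A)^\flat)$ modulo duality together with Lemma~\ref{compact-contract} to control which side is already compact. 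I would try the direct verification first, since it is elementary, and fall back on the duality-involution structure only if the bookkeeping becomes opaque.
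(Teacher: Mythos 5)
Your proposal is correct and follows essentially the same route as the paper's own proof: a direct expansion of $r_{(P/A)^*}(X)$ and $r_{(P^*\ba A)^\flat}(X)$ for arbitrary $X\subseteq E-A$, converting everything to $r_P$ via the definition of the dual and collapsing the singleton sums by additivity of $\|\cdot\|_r$, after which both sides reduce to $r_P(E-X)-r_P(E)+\sum_{a\in X}[r_P(A\cup\{a\})-r_P(A)]$. The subtleties you flag — taking the dual within the ground set $E-A$, and evaluating the compactification correction relative to $P^*\ba A$ — are exactly the points the paper's computation handles, so carrying out your plan yields the paper's proof.
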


\begin{proof}
First note that both $(P/A)^*$ and $(P^*\setminus A)^\flat$ are defined on the same set, that is $E-A$.
Consider $r_{(P^*\setminus A)^\flat}(X)$ for $X\subseteq E-A$. We have the following chain of equalities:
\begin{align*} 
r_{(P^*\setminus A)^\flat}(X)&=(r_{P^*\setminus A})(X)+\sum\limits_{a\in X}[(\lambda_{P^*\setminus A})(\{a\})-(r_{P^*\setminus A})(\{a\})]\\
&=(r_{P^*\setminus A})(X)+\sum\limits_{a\in X}[(r_{P^*\setminus A})((E-A)-\{a\})-(r_{P^*\setminus A})(E-A)]\\
&=r_{P^*}(X)+\sum\limits_{a\in X}[r_{P^*}((E-A)-\{a\})-r_{P^*}(E-A)]\\
&=r_P(E-X)+||X||_{r_P} -r_P(E) +\sum\limits_{a\in X}[r_P(A\cup\{a\})\\
&\hspace*{20pt}+||E-(A\cup\{a\})||_{r_P}-r_P(E)-(r_P(A)+||E-A||_{r_P}\\&\hspace*{20pt}-r_P(E))]\\
&=r_P(E-X)+||X||_{r_P} -r_P(E) +\sum\limits_{a\in X}[r_P(A\cup\{a\})+||E||_{r_P}\\
&\hspace*{20pt}-||A||_{r_P}-r_P(\{a\})-r_P(E)-r_P(A)-||E||_{r_P}+||A||_{r_P}\\&\hspace*{20pt}+r_P(E)]\\
&=r_P(E-X)+||X||_{r_P}-r_P(E)+\sum\limits_{a\in X}[r_P(A\cup \{a\})-r_P(\{a\})\\&\hspace*{20pt}-r_P(A)]\\
&=r_P(E-X)-r_P(E)+\sum\limits_{a\in X}[r_P(A\cup \{a\})-r_P(A)].
\end{align*}
Now consider $r_{(P/A)^*}(X)$ and consider the following chain of equalities:
\begin{align*}
r_{(P/A)^*}(X)&=(r_{P/A})((E-A)-X)+||X||_{r_{P/A}}-(r_{P/A})(E-A)\\
&=r_P(E-X)-r_P(A)+||X||_{r_{P/A}}-r_P(E)+r_P(A)\\
&=r_P(E-X)-r_P(E)+\sum\limits_{a\in X}(r_{P/A})(\{a\})\\
&=r_P(E-X)-r_P(E)+\sum\limits_{a\in X}[r_P(A\cup \{a\})-r_P(A)].
\end{align*}
Therefore $(P/A)^*=(P^*\setminus A)^\flat$
\end{proof}

Given the above correspondences, it seems natural to operate within the universe of compact
polymatroids. In this universe one could incorporate compactification in the definition of
deletion. Given that a polymatroid and its compactification have the same connectivity 
function, no real loss is incurred by taking this approach if our interest is in polymatroid
connectivity.

\section{Connectivity functions and polymatroids}

We say that a connectivity function $\lambda$ on $E$ is {\em matroidal} if there exists a 
matroid $M$ such that $\lambda=\lambda_M$. We say that $\lambda$ is {\em connected} if
$\lambda(X)>0$ whenever $X$ is a proper nonempty subset of $E$. Assume that $\lambda$ is a
connected matroidal connectivity function, say $\lambda=\lambda_M$. Another matroid
with the same connectivity function is $M^*$. It follows from work of Seymour \cite{se}
and Lemos \cite{le} that, if $r(M)\neq r(M^*)$, then these are the only matroids whose
connectivity functions are equal to $\lambda$. When $r(M)=r(M^*)$, there are cases where 
other matroids can have the same connectivity function. The situation is certainly highly
structured, but, even in the case where the only matroids with connectivity function $\lambda$
are $M$ and its dual, it is by no means straightforward 
to find the rank function of $M$ or $M^*$ from
$\lambda$. 

Let $M$ and $N$ be matroids on $E$. Define the function $r$ by $r(X)=r_M(X)+r_N(X)$ for all
subsets of $E$. It is well known and easily seen that $r$ is the rank function of a 
$2$-polymatroid on $E$. In particular, this holds when $N=M^*$. To eliminate
ambiguity caused by the distinction between the two types of duality for matroids, assume
that $M$ is a loopless matroid.
Then it is elementary to check that for all $X\subseteq E$,
we have 
\[
r_M(X)+r_{M^*}(X)=\lambda(X)+|X|.
\]
Thus we can canonically construct a 2-polymatroid $P$ from $\lambda$. Moreover, it is easily
checked that $\lambda_P=2\lambda$, so that, up to a scaling factor, we have constructed a 
2-polymatroid whose connectivity function is equal to $\lambda$. Alternatively, we could 
observe that the fractional polymatroid $P/2$ has connectivity function equal to 
$\lambda$. In themselves, these observations are not particularly interesting, except for the
fact that they generalise as we now show. 

Let $\lambda$ be a connectivity function on $E$. For $X\subseteq E$, we define $||X||_\lambda$
by 
\[
||X||_\lambda=\sum_{x\in X}\lambda(\{x\}).
\]

\begin{lemma}
\label{basic-stuff}
Let $\lambda$ be a connectivity function on $E$ and let $A$ and $B$ be subsets of $E$
with $A\subseteq B$. Then
\[
\lambda(B)-\lambda(A)\leq ||B-A||_\lambda.
\]
\end{lemma}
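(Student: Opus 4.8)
The plan is to prove this by induction on $|B-A|$, exploiting the fact that the statement for connectivity functions should mirror exactly the proof of Lemma~\ref{bar-bar} for polymatroids. The key observation is that $\lambda$ is submodular, so it behaves like the rank function $r$ did in that earlier lemma; the only difference is that $\lambda$ is not increasing. Fortunately, increasingness is never actually used in the proof of Lemma~\ref{bar-bar}: that argument only invoked submodularity twice, and submodularity is available here.

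First I would establish the base case. When $B-A$ is empty, $B=A$, so the left side is $\lambda(B)-\lambda(A)=0$ and the right side $\|B-A\|_\lambda=\|\emptyset\|_\lambda=0$, giving equality. For the inductive step, I would pick an element $e\in B-A$ and apply submodularity to the sets $B-\{e\}$ and $A\cup\{e\}$ (whose union is $B$ and whose intersection is $A$). This yields
\[
\lambda(B)+\lambda(A)\leq \lambda(B-\{e\})+\lambda(A\cup\{e\}).
\]
Rearranging gives $\lambda(B)-\lambda(A\cup\{e\})\leq \lambda(B-\{e\})-\lambda(A)$, which lets me peel off one element at a time. Alternatively, and perhaps more cleanly, I would follow the exact template of Lemma~\ref{bar-bar}: by submodularity applied to $B-A$ and $A$, we get $\lambda(B-A)+\lambda(A)\geq \lambda(B)+\lambda(\emptyset)=\lambda(B)$, so $\lambda(B)-\lambda(A)\leq \lambda(B-A)$. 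Then a second application of submodularity, splitting $B-A$ into singletons, gives $\lambda(B-A)\leq \sum_{y\in B-A}\lambda(\{y\})=\|B-A\|_\lambda$. Chaining these two inequalities yields the result directly, with no induction needed.

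The second approach is cleaner and I would present that one. The subadditivity claim $\lambda(B-A)\leq \sum_{y\in B-A}\lambda(\{y\})$ does require its own short justification by iterating submodularity over the elements of $B-A$ (peeling off one singleton at a time and using $\lambda(\emptyset)=0$), which is a routine induction; this is the only place where a small amount of care is needed, since the single-step submodularity inequality must be summed correctly.

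The main obstacle, if any, is purely bookkeeping: one must be careful that the proof never secretly relies on $\lambda$ being increasing, since connectivity functions are not. Checking the two submodularity applications above confirms that only submodularity and normalisation ($\lambda(\emptyset)=0$) are used, so the argument transfers verbatim from the polymatroid setting. I expect no genuine difficulty.
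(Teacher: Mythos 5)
Your proof is correct and its final (second) argument is exactly the paper's proof: one application of submodularity to $B-A$ and $A$ (using $\lambda(\emptyset)=0$) gives $\lambda(B)-\lambda(A)\leq\lambda(B-A)$, and iterated submodularity gives $\lambda(B-A)\leq\|B-A\|_\lambda$, whence the result follows by chaining. Your care in noting that increasingness is never needed, only submodularity and normalisation, matches the paper's implicit reasoning precisely.
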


\begin{proof}
It follows from submodularity that, if $Z\subseteq E$, then $\lambda(Z)\leq ||Z||_\lambda$.
It also follows from submodularity that $\lambda(B)-\lambda(A)\leq \lambda(B-A)$.
Combining these two observations gives the result.
\end{proof}

\begin{lemma}
\label{get-poly}
Let $\lambda$ be a connectivity function on $E$ and define the set function $r$ on $E$ by
\[
r(X)=\lambda(X)+||X||_\lambda
\]
for all $X\subseteq E$. Then $r$ is the rank function of a polymatroid on $E$.
\end{lemma}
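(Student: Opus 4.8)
The plan is to verify directly that the set function $r$ defined by $r(X)=\lambda(X)+||X||_\lambda$ satisfies the three defining properties of a polymatroid: it is normalised, submodular, and increasing. Two of these should be essentially immediate, and the whole point of adding the term $||X||_\lambda$ to $\lambda$ is to repair the one property that $\lambda$ itself lacks, namely monotonicity.

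First I would check that $r$ is normalised. Since $\lambda(\emptyset)=0$ by definition of a connectivity function, and $||\emptyset||_\lambda=\sum_{x\in\emptyset}\lambda(\{x\})=0$ as an empty sum, we get $r(\emptyset)=0$ at once. Next I would check submodularity. Here both summands are submodular: $\lambda$ is submodular because it is a connectivity function, and the function $X\mapsto ||X||_\lambda$ is modular, that is, it satisfies $||X\cap Y||_\lambda+||X\cup Y||_\lambda=||X||_\lambda+||Y||_\lambda$, since it is just a sum over singletons. Adding a submodular function to a modular function yields a submodular function, so $r$ is submodular.

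The main point, and the step I expect to carry the real content, is showing that $r$ is increasing. Suppose $A\subseteq B\subseteq E$. Then
\begin{align*}
r(B)-r(A)&=\lambda(B)-\lambda(A)+||B||_\lambda-||A||_\lambda\\
&=\lambda(B)-\lambda(A)+||B-A||_\lambda,
\end{align*}
using that $||B||_\lambda-||A||_\lambda=||B-A||_\lambda$ because the sum defining $||\cdot||_\lambda$ is additive over disjoint sets and $A\subseteq B$. Now Lemma~\ref{basic-stuff} gives exactly $\lambda(B)-\lambda(A)\leq ||B-A||_\lambda$, so the right-hand side above is nonnegative, whence $r(B)\geq r(A)$. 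This is the only place where the argument is not purely formal: it is precisely Lemma~\ref{basic-stuff} (whose proof rests on submodularity of $\lambda$) that guarantees the correction term $||B-A||_\lambda$ dominates the possible decrease $\lambda(A)-\lambda(B)$, and so it is the crux of the whole lemma. Having verified all three properties, I would conclude that $(r,E)$ is a polymatroid.
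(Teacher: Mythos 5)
Your normalisation and submodularity arguments are correct and match the paper's. The gap is in the monotonicity step, exactly where you locate the ``real content''. You need the quantity $\lambda(B)-\lambda(A)+||B-A||_\lambda$ to be nonnegative, i.e.\ you need $\lambda(A)-\lambda(B)\leq ||B-A||_\lambda$: a bound on how far $\lambda$ can \emph{drop} when you enlarge $A$ to $B$. But Lemma~\ref{basic-stuff} says $\lambda(B)-\lambda(A)\leq ||B-A||_\lambda$: a bound on how far $\lambda$ can \emph{rise}. These are different inequalities, and the inference you make --- from the lemma's bound to nonnegativity of $\lambda(B)-\lambda(A)+||B-A||_\lambda$ --- is invalid: an upper bound on $\lambda(B)-\lambda(A)$ says nothing about how negative $\lambda(B)-\lambda(A)$ might be. Nor is the inequality you need a formal consequence of the one you quote: Lemma~\ref{basic-stuff} is proved using only normalisation and submodularity, and for a normalised submodular function that is not symmetric the drop bound can genuinely fail (take $E=\{a\}$ with $\lambda(\emptyset)=0$ and $\lambda(\{a\})=-1$; the rise bound holds with equality, while the drop bound asserts $1\leq -1$).

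The missing ingredient is the symmetry of $\lambda$, and supplying it is precisely how the paper's proof proceeds. Since $\lambda(A)=\lambda(E-A)$ and $\lambda(B)=\lambda(E-B)$, and since $E-B\subseteq E-A$ with $(E-A)-(E-B)=B-A$, applying Lemma~\ref{basic-stuff} to the nested pair $E-B\subseteq E-A$ gives $\lambda(E-A)-\lambda(E-B)\leq ||(E-A)-(E-B)||_\lambda$, which is exactly the required $\lambda(A)-\lambda(B)\leq ||B-A||_\lambda$. Inserting this one complementation step makes your argument correct and identical in substance to the paper's.
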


\begin{proof}
Evidently $r(\emptyset)=0$. Assume that $X\subseteq Y\subseteq E$. Then we have
\begin{align*}
r(Y)-r(X)&=\lambda(Y)+||Y||_\lambda -\lambda(X)-||X||_\lambda\\
&=||Y-X||_\lambda +\lambda(Y)-\lambda(X)\\
&=||(E-X)-(E-Y)||_\lambda -(\lambda(E-X)-\lambda(E-Y)).
\end{align*}
It follows from Lemma~\ref{basic-stuff} that 
$||(E-X)-(E-Y)||_\lambda -(\lambda(E-X)-\lambda(E-Y))\geq 0$. Hence 
$r$ is increasing.

Assume that $X,Y\subseteq E$. Observe that
$||X||_\lambda+||Y||_\lambda=||X\cup Y||_\lambda+||X\cap Y||_\lambda$. Using this
fact and the submodularity of $\lambda$, we have
\begin{align*}
r(X)+r(Y)&=\lambda(X)+\lambda(Y)+||X||_\lambda+||Y||_\lambda\\
&\geq \lambda(X\cup Y)+\lambda(X\cap Y)+||X\cup Y||_\lambda+||X\cap Y||_\lambda\\
&=r(X\cup Y)+r(X\cap Y).
\end{align*}
Hence $r$ is submodular and $r$ is the rank function of a polymatroid as claimed.
\end{proof}

We say that the polymatroid constructed via Lemma~\ref{get-poly} is the polymatroid 
{\em induced} by $\lambda$.

\begin{lemma}
\label{get-poly2}
Let $\lambda$ be a connectivity function on $E$ and let $P=(E,r)$ denote the
polymatroid induced by $\lambda$. Then the following hold.
\begin{itemize}
\item[(i)] $\lambda_P(X)=2\lambda(X)$ for all $X\subseteq E$. 
\item[(ii)] $P$ is compact.
\item[(iii)] $P$ is self dual. 
\end{itemize}
\end{lemma}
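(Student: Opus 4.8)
The plan is to verify the three claims directly from the definition of the induced rank function $r(X)=\lambda(X)+||X||_\lambda$, using the earlier lemmas on duality and compactness. All three parts should reduce to routine substitutions, so the main work is bookkeeping rather than any serious obstacle.

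For (i), I would simply expand $\lambda_P(X)=r(X)+r(E-X)-r(E)$ using the definition of $r$. The key observation is that $||X||_\lambda+||E-X||_\lambda=||E||_\lambda$, and that $\lambda(E)=\lambda(\emptyset)=0$ since $\lambda$ is symmetric and normalised. Substituting gives
\[
\lambda_P(X)=\lambda(X)+||X||_\lambda+\lambda(E-X)+||E-X||_\lambda-\lambda(E)-||E||_\lambda.
\]
Now $\lambda(X)=\lambda(E-X)$ by symmetry, the two norm terms combine with $||E||_\lambda$ to cancel, and $\lambda(E)=0$, leaving $\lambda_P(X)=2\lambda(X)$ as required.

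For (ii), by Lemma~\ref{spanned} it suffices to show $r(E-\{e\})=r(E)$ for each $e\in E$. I would compute both sides from the definition: $r(E)=\lambda(E)+||E||_\lambda=||E||_\lambda$ (again using $\lambda(E)=0$), while $r(E-\{e\})=\lambda(E-\{e\})+||E-\{e\}||_\lambda=\lambda(\{e\})+||E||_\lambda-\lambda(\{e\})=||E||_\lambda$, using symmetry to replace $\lambda(E-\{e\})$ by $\lambda(\{e\})$. The two are equal, so every element is compact.

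For (iii), I want to show $P=P^*$, i.e. $r^*(X)=r(X)$ for all $X$. Expanding the definition of the dual, $r^*(X)=r(E-X)+||X||_r-r(E)$, where $||X||_r=\sum_{x\in X}r(\{x\})$. Since $r(\{x\})=\lambda(\{x\})+||\{x\}||_\lambda=2\lambda(\{x\})$, we get $||X||_r=2||X||_\lambda$. Substituting the formula for $r$ and using $\lambda(E-X)=\lambda(X)$, $||E-X||_\lambda=||E||_\lambda-||X||_\lambda$, and $r(E)=||E||_\lambda$, the expression should collapse to $\lambda(X)+||X||_\lambda=r(X)$. I expect the only thing to watch is keeping the $||{\cdot}||_\lambda$ and $||{\cdot}||_r$ norms straight and correctly using $\lambda(E)=0$ throughout; once those substitutions are made, all three parts follow by direct cancellation with no genuine difficulty.
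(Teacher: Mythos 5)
Your proposal is correct and follows essentially the same route as the paper's own proof: all three parts are verified by direct substitution into $r(X)=\lambda(X)+||X||_\lambda$, using $r(E)=||E||_\lambda$, the symmetry $\lambda(X)=\lambda(E-X)$, the additivity $||X||_\lambda+||E-X||_\lambda=||E||_\lambda$, and (for (iii)) the identity $||X||_r=2||X||_\lambda$. The only cosmetic difference is that you cite Lemma~\ref{spanned} explicitly in (ii), whereas the paper uses that characterisation implicitly by showing $r(E)-r(E-\{e\})=0$.
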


\begin{proof}
Observe that $r(E)=\lambda(E)+||E||_\lambda=||E||_\lambda$. We use this fact several times.
Say $X\subseteq E$. Then 
\begin{align*}
\lambda_P(X)&=r(X)+r(E-X)-r(E)\\
&=\lambda(X)+||X||_\lambda+\lambda(E-X)+||E-X||_\lambda-||E||_\lambda\\
&=\lambda(X)+\lambda(E-X)\\
&=2\lambda(X).
\end{align*}
Hence (i) holds. Consider (ii). Say $e\in E$.
\begin{align*}
r(E)-r(E-\{e\})&=||E||_\lambda-\lambda(E-\{e\})-||E-\{e\}||_\lambda\\
&=||\{e\}||_\lambda-\lambda(\{e\})\\
&=0.
\end{align*}
Hence $P$ is compact, so that (ii) holds. Consider (iii). Let $r^*$ denote the 
rank function of $P^*$. Then, for $X\subseteq E$,
\[
r^*(X)=r(E-X)+||X||_r-r(E).
\]
By definition, $||X||_r=2||X||_\lambda$. Also $r(E-X)=\lambda(E-X)+||E-X||_\lambda$,
and $\lambda(E-X)=\lambda(X)$. Hence we have 
\begin{align*}
r^*(X)&=\lambda(X)+||E-X||_\lambda+2||X||_\lambda-||E||_\lambda\\
&=\lambda(X)+||X||_\lambda\\
&=r(X).
\end{align*}
Hence $P$ is self dual.
\end{proof}

From the above lemmas we obtain

\begin{theorem}
\label{poly-cor}
Every connectivity function is the connectivity function of a compact, self-dual polymatroid.
\end{theorem}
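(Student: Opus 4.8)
The plan is to start from the polymatroid $P=(E,r)$ induced by $\lambda$ via Lemma~\ref{get-poly}, and then scale its rank function by $\frac{1}{2}$ to correct for the factor of two that appears in Lemma~\ref{get-poly2}(i). The only genuine issue is that the induced polymatroid has connectivity function $2\lambda$ rather than $\lambda$; everything else will follow by checking that halving the rank function preserves the three relevant properties.

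Concretely, I would define $P'=(E,r')$ where $r'=\frac{1}{2}r$. First I would observe that $r'$ is the rank function of a polymatroid: multiplying a normalised, increasing, submodular function by the positive constant $\frac{1}{2}$ leaves all three properties intact, so this is immediate once we know $r$ is a polymatroid rank function from Lemma~\ref{get-poly}. Note that $r'$ is permitted to take non-integer values, since general polymatroids are real-valued; this $P'$ is exactly the ``fractional polymatroid $P/2$'' alluded to earlier in the paper.

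Next I would compute the connectivity function. Since $\lambda_{P'}(X)=r'(X)+r'(E-X)-r'(E)=\frac{1}{2}\lambda_P(X)$ and Lemma~\ref{get-poly2}(i) gives $\lambda_P=2\lambda$, we get $\lambda_{P'}=\lambda$, as desired. For compactness, $e\in E$ is compact in $P'$ precisely when $r'(\{e\})=\lambda_{P'}(\{e\})$, and since both sides are exactly half of the corresponding quantities for $P$, this identity holds because $P$ is compact by Lemma~\ref{get-poly2}(ii).

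The one step requiring a moment's care is self-duality, as I must verify that scaling commutes with the duality operation of Section~3. The key point is that $||X||_{r'}=\sum_{x\in X}r'(\{x\})=\frac{1}{2}\sum_{x\in X}r(\{x\})=\frac{1}{2}||X||_r$, so that
\[
(r')^*(X)=r'(E-X)+||X||_{r'}-r'(E)=\tfrac{1}{2}\bigl(r(E-X)+||X||_r-r(E)\bigr)=\tfrac{1}{2}r^*(X).
\]
Because $P$ is self-dual by Lemma~\ref{get-poly2}(iii), we have $r^*=r$, whence $(r')^*=\frac{1}{2}r=r'$, and so $P'$ is self-dual. This establishes that $P'$ is a compact, self-dual polymatroid with connectivity function $\lambda$, proving the theorem. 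The main (and essentially only) obstacle is the factor-of-two discrepancy, resolved by the scaling; the linearity of $||\cdot||_r$ in $r$ is what guarantees that this scaling is compatible with duality.
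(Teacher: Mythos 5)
Your proof is correct and follows exactly the paper's approach: the paper also takes the induced polymatroid $P$ from Lemma~\ref{get-poly}, scales its rank function by $\tfrac{1}{2}$, and asserts that the result ``is easily checked'' to satisfy the theorem. You have simply filled in those routine checks (scaling preserves the polymatroid axioms, halves both sides of the compactness identity, and commutes with duality via the linearity of $||\cdot||_r$), all of which are carried out correctly.
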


\begin{proof}
Let $\lambda$ be a connectivity function on $E$ and let $P$ be the polynomial induced by
$\lambda$. Define $P/2$ by $P/2(X)=P(X)/2$ for all $X\subseteq E$. 
It is easily checked that $P/2$ satisfies the conditions of the
theorem.
\end{proof}

Specialising to the integer-valued case we obtain 

\begin{corollary}
\label{poly-cor2}
Every integer-valued connectivity function is the connectivity function of a half-integral
self-dual polymatroid.
\end{corollary}

The case of unitary integer-valued connectivity functions is of particular interest. Up to 
a scaling factor, these are captured by self-dual integral 2-polymatroids. This does suggest
that such 2-polymatroids are worth studying in their own right.


\begin{thebibliography}{77}

\bibitem{cw} B. Clark and G. Whittle, Tangles, trees, and flowers, 
{\em J. Combin.\ Theory Ser.\ B} {\bf 103} (2013) 385-407.

\bibitem{ggw1} J. Geelen, B. Gerards, and G. Whittle, Tangles, tree decompositions and
grids in matroids, {\em J. Combin.\ Theory Ser.\ B} {\bf 99} (2009) 657-667.

\bibitem{gs} M. Grohe and P. Schweitzer, Computing with tangles, arXiv:1503.00190v1 [cs.DM] 2015.

\bibitem{hicks} I. Hicks, The branchwidth of graphs and their cycle matroids, 
{\em J. Combin.\ Theory Ser.\ B} {\bf 97} (2007) 681-692.

\bibitem{hu} F. Hundertmark, Profiles. An algebraic approach to combinatorial 
connectivity, {\em ArXiv,} arXiv:1110.6207v1 [math.CO] 2011.

\bibitem{susan} S. Jowett, Recognition Problems for Connectivity Functions, MSc Thesis,
Victoria University thesis, under examination.

\bibitem{le} M. Lemos, Matroids Having The Same Connectivity Function, {\em Discrete Math }
 {\bf 131} (1994) 153-161.

\bibitem{matus} F. Mat\'u\u{s}, Polymatroids and polyquantoids, {\em Proceedings of WUPES 2012}
(2012) 126--136.

\bibitem{songbao} S. Mo, The Structure of Connectivity Functions, MSc Thesis, 
Victoria University
of Wellington.

\bibitem{ox92} J.  Oxley,  {\em Matroid Theory,} 
Oxford University Press, New York, 2011.

\bibitem{rsx} N. Robertson and P. Seymour, Graph minors. X. Obstructions to tree decompositions,
{\em J. Combin.\ Theory Ser.\ B} {\bf 52} (1991) 153-190.

\bibitem{se} Seymour, P., On the Connectivity Function of a Matroid, {\em J. Combin.\ 
Theory Ser.\ B} {\bf 45} (1988) 25-30. 

\bibitem{wh} G. Whittle, Duality in Polymatroids and Set Functions, {\em Combinatorics, Probability and Computing} {\bf 1} (1992) 275-280.

\end{thebibliography}
\end{document}